\def\ignore#1{{}}
\newtheorem{theorem}{Theorem}
\newtheorem{remark}{Remark}
\newcommand{\p}{\mathbbm{P}}
\newcommand{\e}{\mathbbm{E}}
\newcommand{\N}{\mathbbm{N}}
\newcommand{\R}{\mathbbm{R}}
\title{Escape from parsimony of a double-cut-and-join \\genome evolution process}
\author[1]{Mona Meghdari Miardan}
\author[2]{Arash Jamshidpey}
\author[1]{David Sankoff}
\affil[1]{Department of Mathematics and Statistics, University of Ottawa, Ottawa, ON K1N 6N5, Canada}
\affil[2]{Department of Mathematics, Columbia University, 2990 Broadway, New York, NY 10027, USA}
\date{}                     %% if you don't need date to appear
\begin{document}
\date{September 27, 2021}
\maketitle
\begin{abstract}
We analyze models of genome evolution based on both restricted and unrestricted double-cut-and-join (DCJ) operations.  We compare  the number of operations along the evolutionary trajectory to the DCJ distance of the genome from its ancestor at each step, and determine at what point they diverge: the process escapes from parsimony. Adapting the method developed by Berestycki and Durret \cite{bd-2006}, we estimate the number of cycles in the breakpoint graph of a random genome at time $t$ and its ancestral genome by the number of tree components of an Erd\"os-R\'enyi random graph constructed from the model of evolution. In both models, the process on a genome of size $n$ is bound to its parsimonious estimate up to $t\approx n/2$ steps.
\end{abstract}
\section{Introduction} The introduction of the ``double-cut-and-join" (DCJ) methods, by \cite{yaf-2005}, and in a slightly different formulation  by \cite{bms-2006}, greatly invigorated the field of genome distance algorithmics.  DCJ incorporated the operation of  interchanging blocks of genes between arbitrary regions of a chromosome (including ``block transposition" of neighbouring regions) to the two previously studied operations of reversing of a block of genes in a chromosome (``inversion") and interchanging the prefixes or suffixes of  two chromosomes (``reciprocal translocation", which formally includes chromosome fusion and fission).   As a result, the exact DCJ  distance is easily calculated in linear time, avoiding the cumbersome work necessitated by the ingenious solution (called ``HP") of \cite{HP-95a} and \cite{HP-95b} a decade earlier, where only reversals and reciprocal translocations were considered.

Edit distances for comparing genomes, like DCJ and HP, minimize the number of steps, chosen from a small repertoire of operations, to transform one genome into another.  A mistaken comment sometimes heard, whether approving or critical, is that these methods assume that ``evolution takes the shortest path".  The methods, however, make no assumption at all about the preferences of the evolutionary process;  they are simply tools for inferring evolutionary history.

The minimization criterion basic to edit distances makes it a maximum parsimony method.  As such it is model-free, unlike probability-based models.  Nevertheless, under any reasonable model of evolution, calculations or simulations show that for a genome that has only moderately evolved, DCJ or similar methods inevitably recover the true number of steps that actually occurred.   In this sense, evolution has indeed ``followed" a most parsimonious path, but this is only an inferential result, not an evolutionary tendency.  We say that the evolution is``parsimony-bound" during this initial period.

As a genome diverges more substantially from its initial state, however, parsimony methods will eventually find a shorter path than the one actually traversed by the evolving genome.   Parsimony underestimates the length of the evolutionary trajectory.  When this occurs, we can say that evolution ``escapes from parsimony".

The nature of this escape and the point at which it happens depends on the stochastic process modeling evolution and the distance measure comparing points on the sample path to the starting point.  Little mathematical work has been done relevant to this direct question, although there has been much research on probabilistic modeling of evolution using DCJ operations, as reviewed in \cite{bgt-2015}.  In this paper, we model the evolution of genomes as a Markov jump process on the space of genomes, and use DCJ as a tool for tracking it.

A basic result in this field is due to Berestycki and Durrett \cite{bd-2006}, who found that escape from parsimony for a random transposition (pairwise exchange) process does not occur before $n/2$ steps, where $n$ is the number of genes in the genome. 

\section{Genomes and DCJ operations}
In the absence of duplication, genes or markers in a genome can be represented by different positive integers. We denote by $\mathcal{G}_{n,k}$ the space of all genomes with $n$ specific genes (or markers), denoted by $1,\cdots,n$, exactly $k$ linear chromosomes, and a number of circular chromosomes (possibly 0). For the purpose of this paper, it is most convenient to represent a genome $G\in\mathcal{G}_{n,k}$ as an alternating graph $G=(V(G),E(G))$ on $2(n+k)$ vertices, $2n$ of which are of degree $2$ and the rest are of degree $1$,as in Figure \ref{graph}. More precisely, we denote by $+i$ and $-i$ the extremities (head and tail) of gene (or marker) $i\in [n]:=\{1,\cdots,n\}$, and denote by $\tau_j$, $j\in [2k]$, the telomeres of $k$ linear chromosomes, i.e. $$V(G)=\{\pm 1,\cdots,\pm n\}\cup \{\tau_1,\cdots,\tau_{2k}\}.$$ On the other hand, let $E_1(G)$ be  invariant for all genomes $G$, and define it to be the set of edges $\{-i,+i\}$, for $i\in [n]$, and let $E_2(G)$ vary for different genomes $G$ and be defined as a perfect matching on $V(G)$. Then, $E(G)$  consists of the edges in $E_1(G)$ and $E_2(G)$, where multiple edges are allowed. We call $E_1(G)$ and $E_2(G)$ the set of genes and set of adjacencies of genome $G$, respectively. We notice that $|E_2(G)|=n+k$, for any $G\in \mathcal G_{n,k}$. We suppose the edges in $E_1(G)$ and $E_2(G)$ are dotted and solid, respectively. The solid edges which are incident to the telomeric vertices $\{\tau_1,\cdots,\tau_{2k}\}$ are called telomeric edges, and those vertices in $\{\pm 1,\cdots,\pm n\}$ which are adjacent to $\{\tau_1,\cdots,\tau_{2k}\}$ are called telomeric gene extremities.  It is then clear that the graph $G$ is a union of disjoint alternating paths (linear chromosomes) and cycles (circular chromosomes). Indeed, $deg(+i)=deg(-i)=2$ and $deg(\tau_{j})=1$, for any $i\in [n]$ and $j\in [2k]$. Note that, under this definition, a linear chromosome may be null (i.e. with no genes). In this case, the null chromosome only contains two telomeres $\tau_i, \tau_j$ and their connecting edge $\{\tau_i,\tau_j\}$.\\
A natural orientation can be assigned to the connected components (chromosomes)  of each genome in $\mathcal{G}_{n,k}$. To establish this, recall that an orientation of a graph $G$ is a digraph whose underlying graph is $G$. In other words, we can assign a direction to each edge $\{u,v\}\in E(G)$ by assigning exactly one of the two possible arcs $(u,v)$ or $(v,u)$ to it. An orientation of $G$ is then a digraph obtained from $G$ by assigning a direction to each of its edges. Clearly, there are $2^{2(n+k)}$ orientations for $G\in \mathcal{G}_{n,k}$. Let $C$ be a cycle component, and let $P$ be a path component of $G$. A direction of $C$, namely $\overrightarrow{C}$, is an orientation for which $\overrightarrow{C}$ is a directed cycle, that is $deg_+(v)=deg_-(v)=1$, for $v\in V(C)$, where $deg_+(v)$ and $deg_-(v)$ denote the outdegree and indegree of $v$. Similarly, one can give a direction to a linear chromosome $P$, by choosing one of its telomeres as a starting point. To be more precise,  an orientation $\overrightarrow{P}$ is called a direction for $P$, if it is a directed path.  Hence, there are exactly two directions for each component of $G$. Denoting by $\kappa(G)$ the number of connected components of $G$, this implies that the number of directions of $G$ is $2^{\kappa(G)}$. Now, let $x$ be the gene with the smallest label in a circular chromosome $C$ (or a linear chromosome $P$, respectively). The standard direction is the unique direction of $C$ ( $P$, respectively) for which the direction of $\{-x,+x\}$ is given by  arc $(-x,+x)$. Moreover, the standard direction of $G$ is the unique orientation of that for which each component is assigned to its standard direction. We henceforth assume that each genome $G\in \mathcal{G}_{n,k}$ is furnished with its standard direction. \\

\begin{figure}[!h]
            \centering
            \includegraphics[width=.85\textwidth]{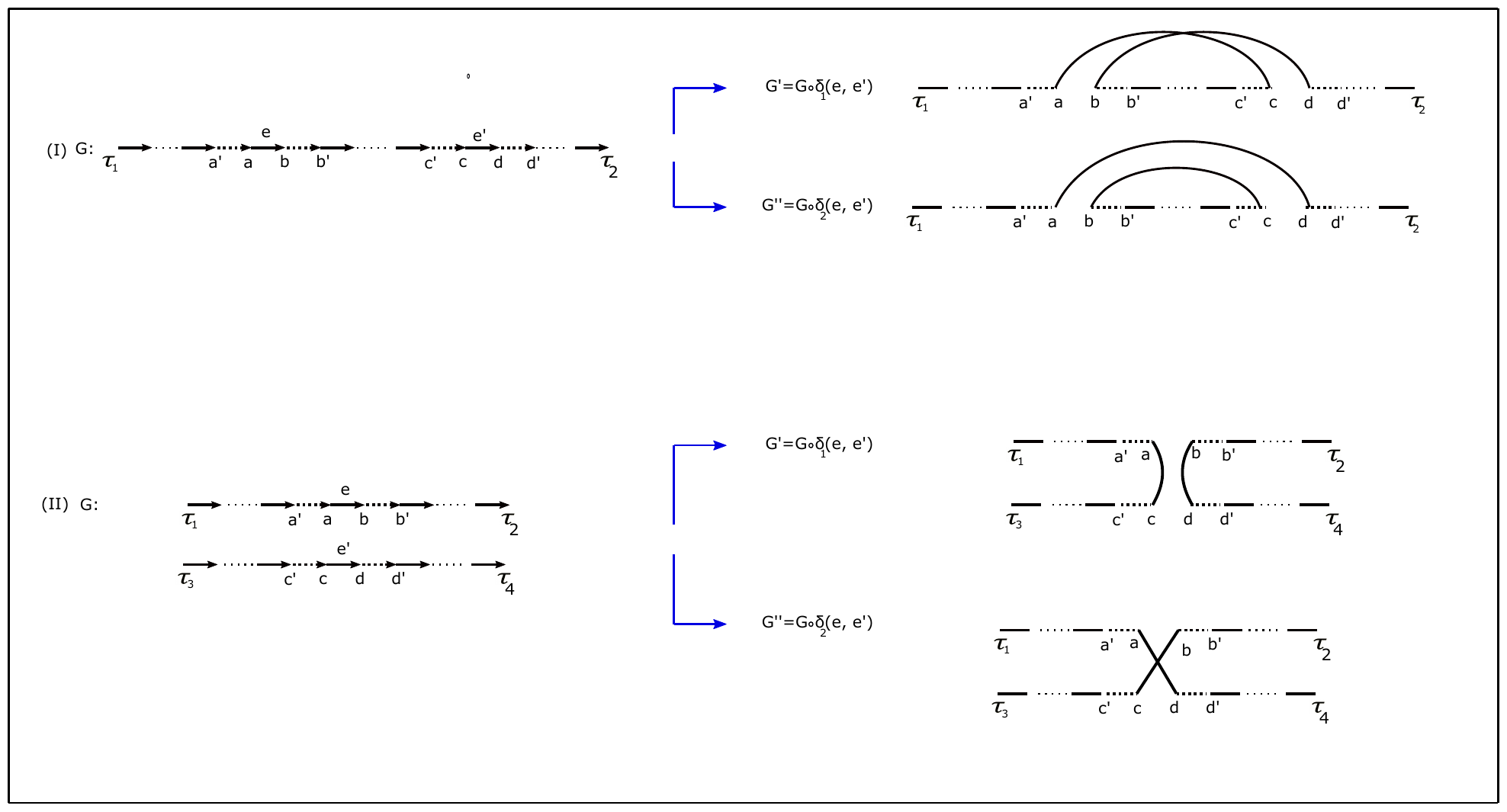}
            \caption{DCJ operations.}
            \label{graph}
        \end{figure}
A double-cut-and-join (DCJ) operation is a genomic operation that cuts two given adjacencies (edges) of a genome and rejoins the free extremities of the involved genes in one of the two possible ways (Figure \ref{graph}). Two types of the DCJ operations are defined as follows. For $G\in \mathcal{G}_{n,k}$, and $e=(a,b),e'=(c,d)\in E_2(G)$, a first-type DCJ (DCJ-1 or $\delta_1$) operation, denoted by $\delta_1(e,e')$ is defined by $G'=G \circ \delta_1(e,e')$, where $V(G')=V(G)$, $E_1(G')=E_1(G)$, and $$E_2(G')=E_2(G)\cup\{(a,c),(b,d)\}\setminus\{e,e'\}.$$ Similarly, a second-type DCJ (DCJ-2 or $\delta_2$) operation on $G$ is defined by $G''=G\circ \delta_2(e,e')$, where $V(G'')=V(G)$, $E_1(G'')=E_1(G)$, and $$E_2(G'')=E_2(G)\cup\{(a,d),(b,c)\}\setminus\{e,e'\}.$$ Note that for $e,e'$ taken from the same chromosome, $\delta_1(e,e')$ is an inversion (reversal) operation, as shown in Figure \ref{graph}-$I$. From the definition, $\mathcal{G}_{n,k}$ is closed under the DCJ operations, and the DCJ distance between two genomes $G_1,G_2\in \mathcal{G}_{n,k}$, $d(G_1,G_2)$, is the minimum number of DCJ operations required to transform $G_1$ into $G_2$ or vice-versa. Equivalently, this distance can be interpreted as the length of the shortest path on the graph $\mathscr{G}_{n,k}$ with the set of vertices $\mathcal{G}_{n,k}$ where each pair of vertices are connected with an edge if their DCJ distance is $1$. The formula for the DCJ distance of two genomes can be given in terms of the numbers of cycles and even paths in their breakpoint graph. In mathematical terms, letting $T:=\{\tau_1,\cdots,\tau_{2k}\}$ and $T':=\{\tau'_1,\cdots,\tau_{2k}'\}$, the breakpoint graph of two genomes $G_1,G_2\in \mathcal{G}_{n,k}$, denoted by $BP(G_1,G_2)$, is a graph whose vertices are given by $\{\pm 1,\cdots, \pm n\}\cup T\cup T'$ and whose edges are the adjacencies (edges) of $G_1$ and $G_2^*$ (multiple edges are allowed), where $G_2^*$ is obtained from $G_2$ by changing the labels of its telomeric vertices $\tau_j$ to $\tau'_j$, for $j=1,\cdots,2k$.  The adjacencies of $G_1$ and $G_2^*$ are usually presented in two different colors, say gray and black. We notice that every vertex of $BP(G_1,G_2)$ which is obtained from a telomeric vertex of $G_1$ or $G_2^*$ has degree $1$. The rest of the vertices of $BP(G_1,G_2)$ are all of degree $2$, that is each such vertex is incident to exactly one gray edge and one black edge. Hence, $BP(G_1,G_2)$ is the disjoint union of some alternating cycles and $2k$ alternating paths (sometimes called lines in this paper). The DCJ distance between two genomes $G_1$ and $G_2$ is then given by
\begin{equation}\label{dcjformula}
d(G_1,G_2)=n-\mathbf C(G_1,G_2)-\frac{\mathbf P_e(G_1,G_2)}{2},
\end{equation}
where $\mathbf C(G_1,G_2)$ and $\mathbf P_e(G_1,G_2)$ determine the number of cycles and the number of even paths in $BP(G_1,G_2)$, respectively. A line is called a $TT$-line or with $TT$ ends if both telomeric vertices of that line belong to $T$. Lines with $TT'$ or $T'T'$ ends are defined similarly. It is straightforward to see that a line is even (of even size) if and only if it has $TT'$ ends. In the same spirit of the standard direction assigned to linear and circular chromosomes of a genome in $\mathcal{G}_{n,k}$, one can give a natural direction to the cycles and lines of a breakpoint graph. This can be done by determining a start and an end point for a line. More precisely, the direction of a $TT'$ line is given by traversing it, starting at its $T$ telomeric vertex and ending at its $T'$ telomeric vertex. For $TT$ or $T'T'$ lines, the starting point is the telomeric vertex with a smaller index. On the other hand, a cycle can be traversed, starting at its gene extremity with the minimum index, through the unique black edge incident to it. It is clear that the described direction of a cycle or a line of $BP(G_1,G_2)$ does not necessarily coincide with its orientation induced from the direction of edges in genomes $G_1$ and $G_2$.\\

The effect of a DCJ operation on the distance between two genomes can be studied as follows. Let $G_1,G_2\in \mathcal{G}_{n,k}$ and $e=[a,b],e'=[c,d]\in E_2(G_2)$, where by $[a,b]=[a,b]_{G_1,G_2}$, we mean that the direction of $e$ is from $a$ to $b$, in the natural direction of $BP(G_1,G_2)$, described above. Note once again that this direction may be different from the direction of $\{a,b\}$ in the genome $G_2$. The DCJ operation $\Delta_1(e,e')=\Delta_1^{G_1,G_2}(e,e')$ is defined on $G_2$ by cutting $e$ and $e'$ and joining $a$ to $c$, and $b$ to $d$. Similarly $\Delta_2(e,e')=\Delta_2^{G_1,G_2}(e,e')$ cuts $e$ and $e'$ and joins $a$ to $d$, and $b$ to $c$. Note that $\delta_i$ and $\Delta_i$ operations are different due to the different orders they use to join vertices.\\

Let $\alpha_i:=d(G_1,G_2\circ \Delta_i(e,e'))-d(G_1,G_2)$. In fact, $\alpha_i$ measures the displacement of the genome $G_2$, with respect to a reference genome, after a $\Delta_i$-DCJ operation is performed on it. If $e,e'$ are in two different cycles of $BP(G_1,G_2)$, then $\alpha_1=\alpha_2=1$, since two cycles merge following the DCJ operation on $e$ and $e'$, as in Figure \ref{dcj}-$III$. If both are in one cycle (Figure \ref{dcj}-$IV$), then $\Delta_1$ does not change the number of cycles or paths, but $\Delta_2$ fragments it into two smaller cycles. Hence $\alpha_1=0$ and $\alpha_2=-1$.  If they belong to one line, then, similarly to the cycle case, $\alpha_1=0$. In this case, $\Delta_2$ splits the line into a cycle and a line whose length has the same parity as that of the original line (e.g. if the original line is of an even size, then so is the size of the new line). In this case, since a new cycle is added, we get $\alpha_2=-1$.  When one edge belongs to an odd line ($TT$ or $T'T'$ line) and the other belongs to an even line ($TT'$-line), as in Figure \ref{dcj}-$I$, clearly, after splitting and rejoining two lines together, both $\Delta_1$ and $\Delta_2$ operations result in an odd and an even lines, which means $\alpha_1=\alpha_2=0$. If both lines are odd, but all of their $4$ telomeric vertices come from one genome, say with $TT$ ends, then both types of DCJ operator generate two odd $TT$-lines, i.e. $\alpha_1=\alpha_2=0$. If both are odd, one with $TT$ ends and the other with $T'T'$-ends, then $\Delta_1$ and $\Delta_2$ operations generate two $TT'$-lines which are even, hence $\alpha_1=\alpha_2=-1$. For two even $TT'$-lines, the $\Delta_1$ operation gives two odd lines, one with $TT$ ends and the other with $T'T'$ ends, while $\Delta_2$ operator gives two even $TT'$-lines. Therefore, $\alpha_1=1, \alpha_2=0$. Finally, as shown in Figure \ref{dcj}-$II$, if $e$ belongs to a line and $e'$ belongs to a cycle, the cycle and line merge after implementing $\Delta_1$ or $\Delta_2$ operations. This gives rise to a line whose length has the same parity as that of the original one. Hence in this case $\alpha_1=\alpha_2=1$.
\begin{figure}[!h]
            \centering
            \includegraphics[width=.85\textwidth]{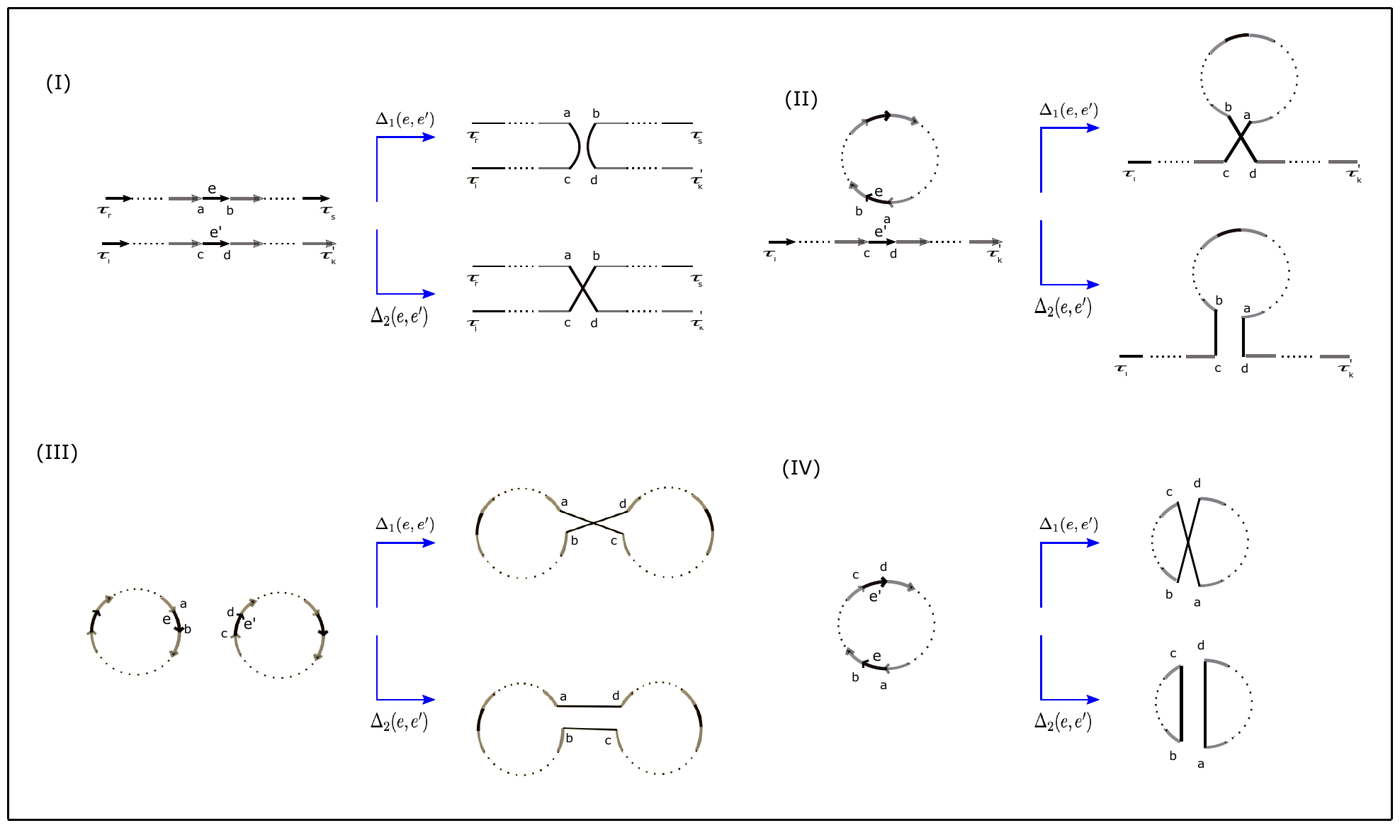}
            \caption{Effects of DCJ operations on the components of the breakpoint graph of two genomes.}  
            \label{dcj}
        \end{figure}
\section{DCJ evolution and DCJ parsimonious paths}
As mentioned in the previous section, the reversals are, in fact, DCJ-1 operations. DCJ can also generate translocations, fissions and fusions. In addition, two consecutive DCJ's may generate a genomic transposition or a block interchange operation \cite{yaf-2005, bms-2006, Tannier-book}. Consequently, DCJ's provide a rich family of mathematical operations which model the genomic evolution of some species and their edit distances. We can model the DCJ evolution of genomes as a Markov jump process on the space of genomes $\mathcal{G}_{n,k}$. Given an initial weight $p\in [0,1]$, let $\iota_1,\iota_2,\cdots$ be an \textit{i.i.d.} sequence of random variables with $\p(\iota_k=1)=p=1-\p(\iota_k=2)$, for any $k\in \N$. The DCJ process $X^{(p)}=(X^{(p)}_t)_{t\geq 0}$ starts at genome $X_0^{(p)}=G_0\in \mathcal{G}_{n,k}$, and jumps at random times $0\leq \tau_1<\tau_2<\cdots$, where $\{\tau_1,\tau_2,\cdots\}$ are points of a Poisson process on $\R_+$, with rate $1$. At a jump time $\tau_i$, having $X_{\tau_i}^{(p)}=G$, we choose two different solid edges $(a,b), (c,d)\in E_2(G)$, uniformly at random, without replacement, and let the random walk jump to $G\circ \delta_{\iota_i}((a,b),(c,d))$. In other words, at each time $\tau_i$, $X^{(p)}$ jumps to $G\circ\delta_1(e,e')$ with probability $p_{n,k}:=p/\{(n+k)(n+k-1)\}$, and to $G\circ\delta_2(e,e')$  with probability $q_{n,k}:=(1-p)/\{(n+k)(n+k-1)\}$, for any pair $e\neq e'\in E_2(G)$. Equivalently, $X^{(p)}$ can be considered as a continuous-time biased nearest-neighbour random walk on $\mathscr{G}_{n,k}$ starting at $G_0$, for which at a Poisson time $\tau_i$, it jumps to one of its neighbours with a probability proportional to its weight where the weights of each $\delta_1$-neighbour and each $\delta_2$-neighbour of $G$ are given by $p$ and $1-p$, respectively. We can see that, for $p=1/2$, $X^{(p)}$ is a continuous-time (symmetric) simple random walk on $\mathscr G_{n,k}$. Hereafter, we suppress the superscript $p$ when $p$ is fixed and there is no risk of ambiguity.\\

We can generalize the above $DCJ$ process as follows. Let $\mathbf{p}:=(p_t)_{t\geq 0}$, $p_t\in [0,1]$ for $t\in \R_+$, be  a given function from $\R_+$ to $[0,1]$. In order to obtain $X^{(\mathbf{p})}=(X^{(\mathbf{p})}_t)_{t\geq 0}$ from $X^{(p)}$, we modify the above definition by giving the weight $p_{\tau_k}$ and $1-p_{\tau_k}$ to $\delta_1$ and $\delta_2$ operations which can be performed on the current genome at each jump time $\tau_k$. Then, the probability of choosing each $\delta_1$ operation and each $\delta_2$ operation, at time $\tau_k$, are given by $p_{\tau_k}/\{(n+k)(n+k-1)\}$ and $(1-p_{\tau_k})/\{(n+k)(n+k-1)\}$. In fact, $X^{(p)}$ is a special case of $X^{(\mathbf{p})}$, for $p_t=p$; $t\in \R_+$.\\

Being parsimony-bound means that in transforming into another genome $B$, a genome $A$ has evolved along  a shortest path to $B$. This is, in fact, the fastest way that $A$ can be transformed into $B$ through the paths of evolution. Roughly speaking, any set of allowed genomic operations $R$ on a space of genomes $S$, under which the genome space is closed, induces an edit distance $\rho$ on $S$ that, for $G_1,G_2\in S$, is defined as the minimum number of elements of $R$ needed to transform $G_1$ into $G_2$ or vice-versa. Of course, we assume that this is possible in finite steps, for any pair of genomes in $S$. When a random walk $(\xi_t)_{t\geq 0}$ models the evolution of genomes under $R$-operations, evolution is parsimony-bound up to time $t$, if
\[
\rho(\xi_0,\xi_t)\approx t.
\]
Setting $S:=\mathcal{G}_{n,k}$, and letting $R$ include all DCJ operations, the simulation results in Figure \ref{sims} show that the parsimony binds the trajectory up to time $cn$, for $c\approx 0.5$. 
\begin{figure}[!h]
            \centering
            \includegraphics[width=.85\textwidth]{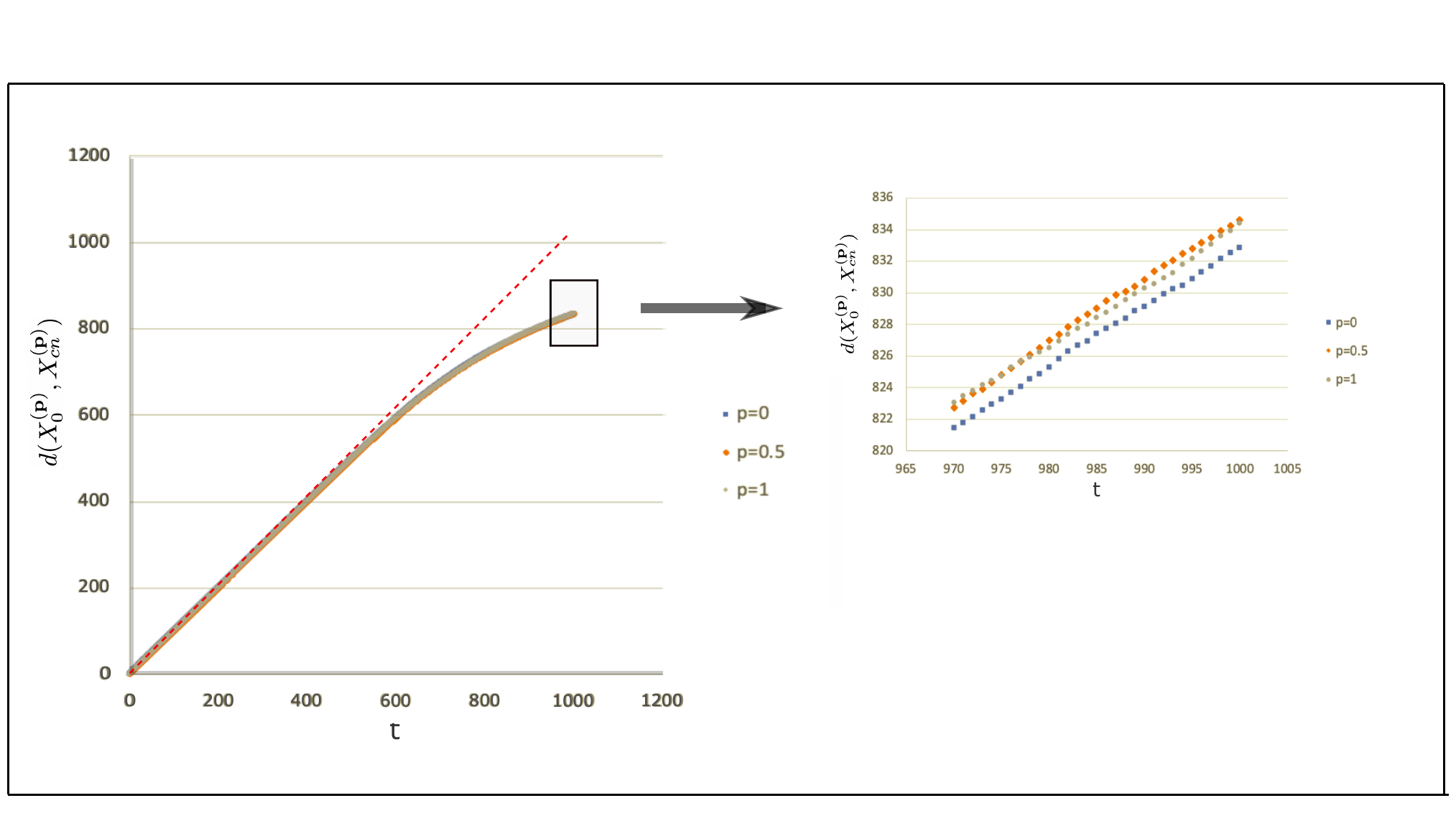}
            \caption{Simulation for unrestricted DCJ model illustrating parsimony binding up to time $cn$, for $c\approx 0.5$. The average DCJ distance estimates are based on $100$ runs, for $p=0, 0.5, 1$. The ancestral genome contains $1000$ genes in $4$ linear chromosomes of size $100, 200, 300$ and $400$.} 
            \label{sims}
        \end{figure}
        Berestycki and Durrett \cite{bd-2006}  proved that  the transposition random walk is bound to parsimony up to time $n/2$. Counting the number of hurdles of a reversal random walk, and applying the results of \cite{bd-2006} , one can see this also holds for the reversal walk up to the same time $n/2$ \cite{js-2013}. Here, we make use of the method developed in \cite{bd-2006} to show that the same result is true for the DCJ stochastic process. To this end, we need a relabeling mechanism that is updated at each jump of the DCJ process. This mechanism is important to construct a random graph process (see also \cite{bd-2006}) whose number of components estimates the number of cycles of the $BP(X_0,X_t)$, and therefore the DCJ distance between the genomes $X_0$ and $X_t$. A labeling of a genome $G\in \mathcal{G}_{n,k}$ is a bijection from $E_2(G)$ onto $[n+k]:=\{1, 2,\cdots, n+k\}$. The labeling process $L=(L_t)_{t\geq 0}$ can be obtained from the DCJ process $X$ as follows. We start from an arbitrary label $L_0=\hat{L}_0$ for $X_0$. From exchangeability, the results in the next theorems do not really depend on any particular choice of $\hat{L}_0$. Let $t$ be a jump time such that $X_t$ jumps to $X_{t+}:=X_t\circ \delta_i(e,e')$ for $e=(a,b),e'=(c,d) \in X_t$. Let $x:=\min\{a,b,c,d\}$, and let $\tilde{u}_x$ be the black edge of $X_t$, either $e$ or $e'$, which is incident to $x$, while $\tilde v_x$ be the other edge, i.e. $\tilde v_x=\{e,e'\}\setminus \tilde u_x$.  Similarly, let $u_x$ be the edge of $X_{t+}$ which is incident to $x$ after performing $\delta_i(e,e')$ on $X_t$, and let $v_x$ be the other new edge in $X_{t+}$ that is created as a result of joining the two other free gene extremities. We update the labeling process $L$ by 
\begin{equation}
L_{t+}(w) =
\left\{
	\begin{array}{lll}
		  L_t(\tilde{u}_x) & \mbox{if } \ w=u_x \\
		  L_t(\tilde{v}_x) & \mbox{if } \ w=v_x \\
		  L_t(w) & \mbox{if} \  \ w\notin\{u_x,v_x\}.
	\end{array}
\right.
\end{equation}
Using $L$ and $X$, we can now define a random graph process $Z=(Z_t)_{t\geq 0}$ whose number of tree components estimates the number of cycles of the breakpoint graph, after a convenient rescaling. Let $Z_0$ be the trivial graph with vertices $1,2,\cdots,n+k$ (representing the labels) and  with no edges.  Let $(\lambda_1(i),\lambda_2(i))$ be the labels of two uniformly random edges (without replacement) in $X_{\tau_i}$, which is selected for a DCJ operation at  time $\tau_i$. Connect $\lambda_1(i)$ and $\lambda_2(i)$ by an edge in $Z_{\tau_i+}$, if they are not already adjacent in $Z_{\tau_i}$.  As a matter of fact, the number of times at which the random sequence $((\lambda_1(i),\lambda_2(i)))_{i\in \N: \ \tau_i<cn}$ takes an ordered pair of labels $(\ell,\ell')\in [n+k]^2$, is a Poisson random variable with mean $cn/\{(n+k)(n+k-1)\}$. Hence, the probability that each edge $\{\ell,\ell'\}$ appears in the random graph is given by
\[
1-e^{-\frac{2cn}{(n+k)(n+k-1)}}\approx \frac{2cn}{(n+k)(n+k-1)}+O(n^{-2}).
\]
Thus, letting $Z^*_{cn}$ be the Erd\"os-R\'enyi random graph on the vertex set $[n+k]$ in which each edge is independently present with probability $2c/n$, the expected difference of number of edges in $Z_{cn}$ and $Z^*_{cn}$ is of order $O(1)$. Therefore, from \cite[Theorem 5.12]{bollobas-2001},
\[
\e[\mathcal{T}_{cn}]=(1-\gamma(c))n+O(1),
\]
where $\mathcal{T}_{cn}$ denotes the number of tree components of $Z_{cn}$, and, for $c>0$,
\[
\gamma(c):=1-\frac{1}{2c}\sum\limits_{j=1}^\infty \frac{j^{j-2}}{j!}(2c e^{-2c})^j.
\]
In fact, $\gamma(c)=c$, for $0<c\leq \frac{1}{2}$, and $\gamma(c)<c$, for $c>1$. Similarly, once again from \cite[Theorem 5.12]{bollobas-2001}, we get
\[
var(\mathcal{T}_{cn})=O(n).
\]
We can then easily see that the Erd\"os-R\'enyi random graph $Z^*_{cn}$ approximates $Z_{cn}$. In mathematical words, from the Chebyshev's inequality
\begin{equation}\label{random-graph-tree-convergence}
\frac{\mathcal{T}_{cn}-(1-\gamma(c))n}{c_n\sqrt{n}}\xrightarrow{p}0, \ \ \ n\rightarrow \infty,
\end{equation}
for an arbitrary sequence $(c_n)_{n\in \N}$, with $c_n\rightarrow \infty$, as $n\rightarrow \infty$.
\begin{theorem}\label{random-graph-thm}
Fix $\mathbf{p}=(p_t)_{t\geq 0}$ and $c>0$. Let $(c_n)_{n\geq 1}$ be an arbitrary sequence for which $c_n\rightarrow \infty$, as $n\rightarrow \infty$. Then, as $n\rightarrow \infty$,
\begin{equation}\label{DCJ-parsimony}
\frac{d(X_0^{(\mathbf{p})},X_{cn}^{(\mathbf{p})})-\gamma(c)n}{c_n\sqrt{n}} \xrightarrow{p}0.
\end{equation}
\end{theorem}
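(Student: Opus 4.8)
\para{Step 1: reduce to counting cycles.} The plan is to start from the DCJ distance formula \eqref{dcjformula},
\[
d\bigl(X_0^{(\mathbf{p})},X_{cn}^{(\mathbf{p})}\bigr)=n-\mathbf C\bigl(X_0^{(\mathbf{p})},X_{cn}^{(\mathbf{p})}\bigr)-\tfrac12\mathbf P_e\bigl(X_0^{(\mathbf{p})},X_{cn}^{(\mathbf{p})}\bigr),
\]
noting that $BP(X_0^{(\mathbf{p})},X_{cn}^{(\mathbf{p})})$ always has exactly $2k$ lines, so $0\le\tfrac12\mathbf P_e\le k=O(1)$. Together with \eqref{random-graph-tree-convergence} and the triangle inequality, this reduces \eqref{DCJ-parsimony} to showing that the discrepancy
\[
D_{cn}:=\mathbf C\bigl(X_0^{(\mathbf{p})},X_{cn}^{(\mathbf{p})}\bigr)-\mathcal T_{cn}
\]
between the number of cycles of the breakpoint graph and the number of tree components of $Z_{cn}$ satisfies $D_{cn}/(c_n\sqrt n)\xrightarrow{p}0$; as $(c_n)$ is an arbitrary sequence with $c_n\to\infty$, this is the same as $D_{cn}=O_p(\sqrt n)$. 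Since neither $Z$ nor $\mathcal T_{cn}$ sees $\mathbf{p}$, the conclusion would then be automatically $\mathbf{p}$-free.

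\para{Step 2: the coupling.} The point is that $\mathbf C$ and $\mathcal T$ are functionals of the same data --- the Poisson jump times and the uniform unordered pairs of edges --- tied together by the labeling process $L$. The combinatorial core is to verify that $L$ is built so that, at every time $t$, each cycle of $BP(X_0^{(\mathbf{p})},X_t^{(\mathbf{p})})$ is \emph{carried} by a single connected component of $Z_t$ (the labels of the adjacencies on that cycle span a connected subgraph of $Z_t$), distinct cycles by distinct components, and the $2k$ lines by at most $2k$ further components; this is checked to persist under every jump using the effect-of-a-DCJ analysis of Section~2 --- the signs $\alpha_i$ and the parallel bookkeeping for the cycle count. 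Granting it, $D_t$ changes by at most $2$ at each jump and only at jumps that either create a surplus edge of $Z$ or are internal to a non-tree component of $Z$ --- a set of jumps whose size in $[0,cn]$ is $O_p(1)$ when $c\le\tfrac12$ but is $\Theta_p(n)$, namely essentially $\mathrm{surplus}(Z_{cn})$, when $c>\tfrac12$.

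\para{Step 3: the subcritical phase.} For $0<c\le\tfrac12$ this already finishes: $Z_{cn}$ is within $O_p(1)$ edges of the subcritical (at $c=\tfrac12$, critical) Erd\"os-R\'enyi graph $G(n+k,2c/n)$, whose total surplus and number of non-tree components are $O_p(1)$ by \cite{bollobas-2001}, so $D_{cn}$ is a sum of $O_p(1)$ increments of size at most $2$ and hence $O_p(1)=o_p(c_n\sqrt n)$. On this range $\gamma(c)=c$, so \eqref{DCJ-parsimony} reads $d\approx cn$ and recovers that the DCJ process is parsimony-bound up to time $n/2$.

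\para{Step 4: the supercritical phase --- the main obstacle.} For $c>\tfrac12$ the surplus of $Z_{cn}$ is of order $n$, all of it inside the giant component, so the bound of Step~2 collapses; the real content --- the step I expect to be hard --- is that the giant component of $Z_{cn}$ nonetheless carries only $O_p(\sqrt n)$ cycles of the breakpoint graph (heuristically one macroscopic cycle plus fluctuations of critical-window order, the cycle-splitting $\delta_2$ operations and the internal merges inside the giant being asymptotically balanced). My plan is to obtain this through moments. A bounded-differences argument, exposing the jumps one at a time, should give $\mathrm{Var}\bigl(\mathbf C(X_0^{(\mathbf{p})},X_{cn}^{(\mathbf{p})})\bigr)=O(n)$, since altering one jump changes the cycle count of the breakpoint graph by $O(1)$ --- the genome analogue of the fact that changing one factor in a product of transpositions changes the number of cycles by at most two. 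It then remains to prove $\e\bigl[\mathbf C(X_0^{(\mathbf{p})},X_{cn}^{(\mathbf{p})})\bigr]=(1-\gamma(c))n+O(\sqrt n)$, equivalently $\e[D_{cn}]=O(\sqrt n)$, which I would do by comparing breakpoint cycles with tree components size by size: for each fixed $\ell$, the expected number of cycles of $BP$ with $\ell$ black edges should match, up to $O(1)$, the expected number of $\ell$-vertex tree components of $Z_{cn}$ (a local estimate using the labeling); cycles and tree components with more than $\sqrt n$ vertices contribute $O(\sqrt n)$ on each side by the trivial volume bound; and the cycles of intermediate size are exactly those carried by the giant, which number $O_p(\sqrt n)$ by the claim above. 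Summing over $\ell$, using $\e[\mathcal T_{cn}]=(1-\gamma(c))n+O(1)$ and Chebyshev, then finishes. The genuinely delicate point --- where, as in \cite{bd-2006}, a union bound over long cycles is useless --- is establishing that the cycle structure carried by the giant component acquires no more than $O_p(\sqrt n)$ cycles; this requires an exploration/genealogical analysis of how the giant is assembled, and it is precisely here that forgetting $\mathbf{p}$ makes the dynamics resemble a random-transposition coalescent.
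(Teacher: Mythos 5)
Your Steps 1--3 match the paper's reduction (discard the $O(k)$ even-path term, compare breakpoint cycles with tree components of $Z_{cn}$, invoke \eqref{random-graph-tree-convergence}) and plausibly dispose of the range $c\le\tfrac12$. But Step 4 is not a proof: the theorem is asserted for every $c>0$, and you explicitly leave open the central claim of the supercritical phase --- that the giant component of $Z_{cn}$ carries only $O_p(\sqrt n)$ cycles of the breakpoint graph --- offering only a plan (moment bounds plus an unexecuted ``exploration/genealogical analysis''). The gap is real and it stems from organizing the error around the surplus of $Z_{cn}$, which, as you yourself note, is $\Theta_p(n)$ for $c>\tfrac12$, so your Step 2 accounting gives nothing there.

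The paper avoids the supercritical difficulty entirely with two elementary observations that make your Step 4 unnecessary. First, cycles of $BP(X_0,X_{cn})$ with more than $2\sqrt{n+k}$ black edges number at most $\sqrt{n+k}=o(c_n\sqrt n)$ by a trivial volume bound (there are only $n+k$ black edges in total), and the same bound holds for tree components of $Z_{cn}$ of that size; so only cycles and tree components of size at most $2\sqrt{n+k}$ need to be matched. Second, among the small cycles, the fragmented ones are controlled not by counting all fragmentation events --- which are indeed $\Theta_p(n)$ once a macroscopic component exists --- but only those fragmentations that \emph{produce a small cycle}: such an event forces the two chosen black edges to lie within distance $2\sqrt{n+k}$ of each other along their common component, which has probability at most $2\sqrt{n+k}/(n+k-1)$ per jump, uniformly in $p_t$, in $c$, and in the current component structure; hence there are $O_p(\sqrt n)$ such events in $[0,cn]$. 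The remaining small non-fragmented cycles correspond to small tree components of $Z_{cn}$ up to the $O_p(1)$ label pairs chosen twice (your $I_{cn}$), and \eqref{random-graph-tree-convergence} finishes. In short: one never needs to understand the cycle structure inside the giant component; the large cycles are negligible by counting, and the small ones are insensitive to the phase transition. If you want to salvage your write-up, replace Step 4 by this size cutoff and the locality bound on small-cycle-producing fragmentations.
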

\begin{proof}
We follow the proof of \cite[Theorem 3]{bd-2006} and adapt it for the DCJ stochastic process. First, note that the number of paths of even length in $BP(X_0,X_{cn})$ is bounded by $2k$, and hence, $$\mathbf{P}_e(X_0,X_{cn})/\{2c_n\sqrt{n}\}\rightarrow 0,$$ pointwisely, as $n\rightarrow \infty$. Using (\ref{dcjformula}), it then suffices to analyze the asymptotic behavior of $\mathbf{C}(X_0,X_{cn})$. The number of cycles of size greater than $2\sqrt{n+k}$ is less than $\sqrt{n+k}$. Therefore, we only need to handle the cycles whose sizes are less than or equal to $2\sqrt{n+k}$. To this end, we say a cycle in $BP(X_0,X_{cn})$ is non-fragmented, if it is the result of a finite number of merging events, without any fragmentation in its history. Otherwise, it is called a fragmented cycle. Note that a fragmented cycle always has at least one fragmentation event (either a fragmentation on a cycle or on a line) in its history. For any $t\geq 0$, denote by $\mathcal{C}_t^{(f)}$ and $\mathcal{C}_t^{(nf)}$ the number of fragmented and non-fragmented cycles of $BP(X_0,X_t)$, respectively, with size less than or equal to $2\sqrt{n+k}$. Clearly, $\mathcal{C}_{cn}^{(f)}$ is bounded by $2$ times the number of fragmentation events, up to time $cn$, which results in a fragmented cycle of size at most $2\sqrt{n+k}$. In order that a DCJ operation on two black edges $e,e'$, located in one connected component of $BP(X_0,X_t)$, splits that component and produces a cycle of size less than $2\sqrt{n+k}$, $e$ and $e'$ should be in distance at most $2\sqrt{n+k}$, in $BP(X_0,X_{cn})$, from each other. At each jump time $\tau_i$, regardless of the value of $p_{\tau_i}$, the probability that $(\lambda_1(i),\lambda_2(i))$ takes a pair of edges with the mentioned property is bounded by $2\sqrt{n+k}/\{n+k-1\}$. Therefore, the number of such fragmentation events, up to time $cn$, is bounded by a Poisson random variable with parameter $cn\cdot 2\sqrt{n+k}/\{n+k-1\}\approx 2c\sqrt{n}$. This implies that, as $n\rightarrow \infty$,
\[
\frac{\mathcal{C}_{cn}^{(f)}}{c_n\sqrt{n}}\xrightarrow{p} 0.
\]
To estimate $\mathcal{C}_{cn}^{(nf)}$, for any pair of labels $\{\ell,\ell'\}$, let $I_{cn}(\ell,\ell')=I_{cn}(\ell', \ell)=1$, if the random sequence $(\{\lambda_1(i),\lambda_2(i)\})_{i: \ \tau_i<cn}$ takes $\{\ell,\ell'\}$ at least twice, and let it be equal to $0$, otherwise. We have
\[
\e[I_{cn}(\ell,\ell')]=\p(I_{cn}(\ell,\ell')=1)=e^{-\frac{2cn}{(n+k)(n+k-1)}}\sum\limits_{j=2}^\infty \left(\frac{2cn}{(n+k)(n+k-1)}\right)^j/j!=O(n^{-2}),
\]
and
\[
var(I_{cn}(\ell,\ell'))=\p(I_{cn}(\ell,\ell')=1)-\p(I_{cn}(\ell,\ell')=1)^2=O(n^{-2})
\]
Letting $I_{cn}=\sum_{\ell,\ell'} I_{cn}(\ell,\ell')$, we have $\e[I_{cn}]=O(1)$ and $var(I_{cn})=O(1)$, that means the number of edges in $Z_{cn}$ for which the corresponding labels  are taken more than once is negligible, and in fact, as $n\rightarrow \infty$,
\[
\frac{I_{cn}}{c_n\sqrt{n}}\xrightarrow{p} 0.
\]
We notice that the tree components of $Z_{cn}$ represent either the non-fragmented cycles and paths of $X_{cn}$ or those fragmented cycles and paths of $X_{cn}$ for which a same pair of labels for a DCJ operation is selected more than once (the number of these cycles or paths is bounded by $I_{cn}$). As the number of even paths is bounded by $2k$, and the number of cycles or tree components of size greater than $2\sqrt{n+k}$ are both bounded by $\sqrt{n+k}$, the tree components of $Z_{cn}$ can approximate $\mathcal{C}_{cn}^{(nf)}$, and
\[
\frac{\mathcal{T}_{cn}-\mathcal{C}_{cn}^{(nf)}}{c_n\sqrt{n}}\xrightarrow{p} 0,
\]
as $n\rightarrow \infty$. Therefore (\ref{random-graph-tree-convergence}) yields
\[
\frac{\mathcal{C}_{cn}^{(nf)}-(1-\gamma(c))n}{c_n\sqrt{n}}\xrightarrow{p} 0,
\]
as $n\rightarrow \infty$, which finishes the proof.
\end{proof}
\begin{remark}
For $c\leq 0.5$, the convergence in (\ref{DCJ-parsimony}) implies that  DCJ stochastic evolution $X$ is bound by parsimony, since $\gamma(c)=c$.
\end{remark}
\section{Restricted DCJ evolution and parsimonious paths}
In the previous section we studied the parsimony binding for the DCJ model when there is no constraint for the number of circular genomes. In other words, an arbitrary number of $\delta_2$-operations, say $r$, could be consecutively performed on one or more linear chromosomes in order to produce $r$ new circular chromosomes in the genome. In this section we study the parsimony problem for the DCJ operations restricted to the unichromosomal linear genomes with the same genes, that is we assume that consecutive $\delta_1$-operations (reversals) are allowed without any restriction, but on the contrary, no two consecutive $\delta_2$-operations are allowed, which means that once an intermediate circular chromosome appears as a result of a $\delta_2$-operation, two adjacencies (black edges), one from each chromosome, are chosen and a DCJ on them generates a unichromosomal linear genome. Another way of looking into this, is by considering a particular subgraph of $\mathcal{G}_{n,1}$, denoted by $\mathcal{G}_n^*$, induced by those vertices (genomes) in $\mathcal{G}_{n,1}$ which have at most one circular chromosome. The vertices of $\mathcal{G}_n^*$ can be partitioned into $U$, the unichromosomal linear genomes, and $\tilde{U}$, the genomes with exactly one linear and one circular chromosomes. From the definition, it is clear that no two vertices in $\tilde{U}$ are adjacent in $\mathcal{G}_n^*$. The edges between two vertices in $U$ represent reversals, while the ones between a vertex from $U$ and a vertex from $\tilde{U}$  represent the non-reversal DCJ operations needed to transform a unichromosomal linear genome into a genome with one linear and one circular chromosome, and vice versa. The restricted DCJ distance $d^*$ between two genomes $G_1,G_2\in \mathcal{G}_n^*$ is then the shortest path between $G_1$ and $G_2$ in $\mathcal{G}_n^*$. Yancopoulos et al. \cite{yaf-2005} showed that, in fact, for any pair of genomes $G_1,G_2\in \mathcal{G}_n^*$,
\[
d^*(G_1,G_2)=d(G_1,G_2).
\]
Hence, once again, one can use the number of cycles and even paths in $BP(G_1,G_2)$, to find $d^*(G_1,G_2)$.\\
 
For $\mathbf{p}=(p_t)_{t\in \R_+}$, where $0\leq p_t\leq 1$, $t\in \R_+$, we define a continuous-time biased nearest-neighbour random walk $Y^{(\mathbf{p})}=(Y^{(\mathbf{p})}_t)_{t\geq 0}$ on $\mathcal{G}_n^*$, where $Y^{(\mathbf{p})}$ jumps at Poisson times with rate $1$. At each jump time $t$, if $Y^{(\mathbf{p})}_t\in \tilde{U}$, then it chooses one of its neighbours, uniformly at random, and jumps to it, and if $Y^{(\mathbf{p})}_t\in U$, then it jumps to each of its $\delta_1$-neighbours (obtained as a result of a reversal DCJ) with probability $2p_t/\{n(n+1)\}$ and jumps to each of its $\delta_2$-neighbours (obtained as a result of a non-reversal DCJ) with probability $2(1-p_t)/\{n(n+1)\}$. The case $p_t=1$, for any $t\geq 0$, gives the reversal (inversion) random walk on the symmetric group of size $n$. It is known that the reversal random walk escapes from its origin at its maximum speed up to time $n/2$, for large $n$, where $n$ is the number of genes in a unichromosomal linear genome \cite{bd-2006, js-2013}.\\ 

We define the labeling $\tilde{L}=(\tilde{L}_t)_{t\geq 0}$ and the random graph process $\tilde{Z}=(\tilde{Z}_t)_{t\geq 0}$, from $Y^{(\mathbf{P})}$, in the same manner as we defined $L$ and $Z$ from $X^{(\mathbf{P})}$. As before, we drop the superscript $\mathbf{p}$ when there is no cause for confusion. The following theorem estimates $d(Y_0,Y_{cn})$ using the number of tree components of $\tilde{Z}_{cn}$.
\begin{theorem}
Let $\mathbf{p}=(p_t)_{t\geq 0}$, such that $0\leq p_t\leq 1$, $c>0$, and let $(c_n)_{n\geq 1}$ be an arbitrary sequence of real numbers such that $c_n\rightarrow \infty$, as $n\rightarrow \infty$. Then, as $n\rightarrow \infty$,
\[
\frac{d(Y_0^{(\mathbf{p})},Y_{cn}^{(\mathbf{p})})-n+\tilde{\mathcal{T}}_{cn}}{c_n\sqrt{n}}\xrightarrow{p} 0,
\]
where $\tilde{\mathcal{T}}_t$ denotes the number of tree components of $\tilde{Z}_t$.
\end{theorem}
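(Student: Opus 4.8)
\noindent\emph{Proof proposal.} The plan is to adapt the proof of Theorem~\ref{random-graph-thm} almost verbatim, the one genuinely new ingredient being the analysis of the jump times at which the walk sits in $\tilde U$. Since $\mathcal{G}_n^*\subset\mathcal{G}_{n,1}$, the breakpoint graph $BP(Y_0,Y_{cn})$ has only $2k=2$ lines, so $\mathbf{P}_e(Y_0,Y_{cn})\leq 2$ and the even-path term in \eqref{dcjformula} is negligible on the scale $c_n\sqrt n$. Using $d^*=d$ on $\mathcal{G}_n^*$ (Yancopoulos et al.\ \cite{yaf-2005}) together with \eqref{dcjformula}, it then suffices to show that $\bigl(\mathbf{C}(Y_0,Y_{cn})-\tilde{\mathcal{T}}_{cn}\bigr)/(c_n\sqrt n)\xrightarrow{p}0$. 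As in Theorem~\ref{random-graph-thm} I would first discard the cycles of $BP(Y_0,Y_{cn})$ of size $>2\sqrt{n+1}$ and the tree components of $\tilde Z_{cn}$ of size $>2\sqrt{n+1}$ (at most $\sqrt{n+1}$ of each, hence negligible), then split the remaining small cycles into the numbers $\tilde{\mathcal{C}}_{cn}^{(f)}$ and $\tilde{\mathcal{C}}_{cn}^{(nf)}$ of fragmented and non-fragmented ones; the goal reduces to $\tilde{\mathcal{C}}_{cn}^{(f)}=o_p(c_n\sqrt n)$ together with $\tilde{\mathcal{T}}_{cn}-\tilde{\mathcal{C}}_{cn}^{(nf)}=o_p(c_n\sqrt n)$.

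The step I expect to be the main obstacle is bounding $\tilde{\mathcal{C}}_{cn}^{(f)}$, because from a genome in $\tilde U$ the walk does not select a uniformly random pair of adjacencies but a uniformly random neighbour, i.e.\ a pair consisting of one adjacency from the linear chromosome and one from the circular chromosome. As before, $\tilde{\mathcal{C}}_{cn}^{(f)}$ is at most twice the number of jump times $\tau_i<cn$ at which the DCJ splits a component of $BP(Y_0,Y_{\tau_i})$ and creates a cycle of size $\leq 2\sqrt{n+1}$, and a necessary condition for this is that the two chosen black edges lie in one component of $BP(Y_0,Y_{\tau_i})$ at breakpoint-graph distance $\leq 2\sqrt{n+1}$. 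At a jump with $Y_{\tau_i}\in U$ the chosen pair of adjacencies is uniform over the $\binom{n+1}{2}$ pairs (the weights $p_{\tau_i},1-p_{\tau_i}$ only select the DCJ type on a fixed pair), so this probability is $\leq 2\sqrt{n+1}/n$, exactly as in Theorem~\ref{random-graph-thm}. At a jump with $Y_{\tau_i}\in\tilde U$, write $a$ and $b$ for the numbers of adjacencies of the linear and circular chromosomes, so $a+b=n+1$; since each adjacency has only $O(\sqrt n)$ adjacencies within breakpoint-graph distance $2\sqrt{n+1}$, the number of such pairs with one edge from each chromosome is $O(\sqrt n\,\min(a,b))$, whereas the number of neighbours of $Y_{\tau_i}$ is of order $ab$. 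As $\max(a,b)\geq(n+1)/2$, this probability is $O\bigl(\sqrt n\,\min(a,b)/(ab)\bigr)=O\bigl(\sqrt n/\max(a,b)\bigr)=O(1/\sqrt n)$. Thus, regardless of the current state and of $p_{\tau_i}$, the per-jump probability of a small-cycle fragmentation is $O(1/\sqrt n)$, the number of such events before time $cn$ is stochastically dominated by a Poisson variable with mean $O(\sqrt n)$, and $\tilde{\mathcal{C}}_{cn}^{(f)}/(c_n\sqrt n)\xrightarrow{p}0$.

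For the remaining comparison I would argue exactly as in \cite{bd-2006} and in the proof of Theorem~\ref{random-graph-thm}: the labelling rule defining $\tilde L$ and the update rule of $\tilde Z$ are identical to those of $L$ and $Z$, and a DCJ acts on the components of the breakpoint graph exactly as catalogued around Figure~\ref{dcj}, so the invariant relating label-components to breakpoint-graph components carries over without change. In particular a merging event joins two distinct components of $\tilde Z$ by one new edge, and a tree component of $\tilde Z_{cn}$ corresponds either to a non-fragmented cycle or line of $BP(Y_0,Y_{cn})$ or to a fragmented cycle or line along whose history some pair of labels was chosen at least twice; the number of the latter is bounded by $\tilde I_{cn}:=\sum_{\ell,\ell'}\tilde I_{cn}(\ell,\ell')$, and the Poisson computation of Theorem~\ref{random-graph-thm} gives $\e[\tilde I_{cn}]=O(1)$ and $var(\tilde I_{cn})=O(1)$, hence $\tilde I_{cn}/(c_n\sqrt n)\xrightarrow{p}0$. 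Since the number of lines is at most $2$ and the numbers of large cycles and of large tree components are each at most $\sqrt{n+1}$, this yields $\tilde{\mathcal{T}}_{cn}-\tilde{\mathcal{C}}_{cn}^{(nf)}=o_p(c_n\sqrt n)$; combining with $\mathbf{C}(Y_0,Y_{cn})=\tilde{\mathcal{C}}_{cn}^{(nf)}+\tilde{\mathcal{C}}_{cn}^{(f)}+O(\sqrt{n+1})$ and the first-paragraph reduction completes the plan. I would close with the remark that, unlike in Theorem~\ref{random-graph-thm}, one cannot replace $\tilde Z_{cn}$ by an Erd\"os--R\'enyi graph (nor $\tilde{\mathcal{T}}_{cn}$ by $(1-\gamma(c))n$), because at the $\tilde U$-jumps the two endpoints of the added edge are not a uniformly random pair of labels; the special case $p_t\equiv 1$ is an exception, where $\tilde U$ is never visited, $\tilde Z_{cn}$ is again Erd\"os--R\'enyi, and the theorem recovers the reversal-walk statement of \cite{bd-2006,js-2013}.
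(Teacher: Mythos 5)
Your overall architecture is the same as the paper's: reduce via the distance formula \eqref{dcjformula} and $d^*=d$ to counting cycles, discard large cycles and the at most $2$ lines, show $\tilde{\mathcal{C}}_{cn}^{(f)}=o_p(c_n\sqrt n)$, and compare $\tilde{\mathcal{T}}_{cn}$ with $\tilde{\mathcal{C}}_{cn}^{(nf)}$ via the indicator sum $\tilde I_{cn}$. Your treatment of the fragmentation bound at $\tilde U$-jumps (the $O\bigl(\sqrt n\,\min(a,b)/(ab)\bigr)=O(1/\sqrt n)$ estimate) is in fact more explicit than the paper's, which simply asserts the Poisson domination ``once again.''

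However, there is a genuine gap at exactly the point where the restricted model differs from the unrestricted one, and it is the point to which the paper devotes the bulk of its proof. You claim that ``the Poisson computation of Theorem~\ref{random-graph-thm} gives $\e[\tilde I_{cn}]=O(1)$ and $var(\tilde I_{cn})=O(1)$.'' That computation rests on each pair of labels being selected with probability $O(n^{-2})$ at every jump, which fails here: at a jump from $\tilde U$ the chosen pair is uniform only over the $j(n+1-j)$ cross pairs (one adjacency from each chromosome), so a given pair of labels can be selected with probability as large as $1/n$ when the circular chromosome is small ($j=1$). You yourself invoke precisely this non-uniformity in your closing remark to explain why $\tilde Z_{cn}$ is not Erd\"os--R\'enyi, so it cannot simultaneously be ignored when bounding $\tilde I_{cn}$. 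The paper's fix is to average over the size distribution of the excised circle: with $q_j$ the probability that the circular chromosome has $j$ black edges, one has $q_j\leq 2/n$, whence the per-jump selection probability of a fixed pair of labels is at most $\sum_j q_j/\{j(n+1-j)\}\leq 4h_n/\{n(n+1)\}=O(\ln n/n^2)$; a Poisson coupling then gives $\p(\tilde I_{cn}(\ell,\ell')=1)=O\bigl((\ln n/n)^2\bigr)$ and $\e[\tilde I_{cn}]=O(\ln^2 n)$, $var(\tilde I_{cn})=O(\ln^2 n)$. This is weaker than your claimed $O(1)$ but still $o(c_n\sqrt n)$, so the theorem is unaffected; your proof as written, though, does not justify the step, and without some such conditioning argument the $O(1)$ assertion is unsupported.
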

\begin{proof}
The proof is similar to the proof of Theorem \ref{random-graph-thm}. We define the number of fragmented and non-fragmented cycles of $\tilde{Z}_t$, as we defined the similar notions for $Z_t$, and denote by $\tilde{\mathcal{C}}_t^{(f)}$ and $\tilde{\mathcal{C}}_t^{(nf)}$, the number of fragmented and non-fragmented cycles of $\tilde{Z}_t$ whose sizes are not greater than $2\sqrt{n+1}$. As before, there are at most  $2$ paths and at most $\sqrt{n}$ cycles of size greater than $2\sqrt{n+1}$  in $BP(Y_0,Y_{cn})$. Therefore, we only need to handle the asymptotic behaviors of $\tilde{\mathcal{C}}_t^{(f)}$ and $\tilde{\mathcal{C}}_t^{(nf)}$. Once again, the former is bounded by $2\zeta$, where $\zeta\sim Po(2c\sqrt{n+1})$, and hence, as $n\rightarrow 0$,
\[
\frac{\tilde{\mathcal{C}}_{cn}^{(f)}}{c_n\sqrt{n}}\xrightarrow{p} 0.
\]
Now , it suffices to show, as $n\rightarrow \infty$,
\[
\frac{\tilde{\mathcal{T}}_{cn}-\tilde{\mathcal{C}}_{cn}^{(nf)}}{c_n\sqrt{n}}\xrightarrow{p} 0.
\]
To establish this, for any pair of labels $\{\ell,\ell'\}$ in $(BP(\tilde{Z}_0,\tilde{Z}_{t}))_{0\leq t\leq cn}$, we define the indicator functions $\tilde{I}_{cn}(\ell,\ell')=\tilde{I}_{cn}(\ell',\ell)$ and their total sum $\tilde{I}_{cn}$,  similarly to their counterparts introduced in the proof of Theorem \ref{random-graph-thm}. To find an upper bound for $\e[\tilde{I}_{cn}]$, first observe that the probability that any pair of labels $\{\ell,\ell'\}$ is picked for a DCJ at a jump time $t$ is given by $2/\{n(n+1)\}$, if $Y_t\in U$. If $Y_t\in \tilde{U}$, given that the size of the circular chromosome is $2j$(which means there are exactly $j$ black edges in the circular chromosome), this probability is given by $1/\{j(n+1-j)\}$. Letting $q_j$ be the probability that a non-reversal DCJ results in a circular chromosome of size $2j$, this means that the probability that any pair of labels $\{\ell,\ell'\}$ is picked for a DCJ, at time $t$, for $Y_t\in \tilde{U}$ is bounded by
\begin{equation}
\begin{split}
\sum\limits_{j=1}^n \frac{q_j}{j(n+1-j)}\leq & \ \frac{2}{n}\sum\limits_{j=1}^n \frac{1}{j(n+1-j)}\\
=&\frac{2}{n(n+1)}\sum\limits_{j=1}^n \left(\frac{1}{j}+\frac{1}{n+1-j}\right)\\
=& \frac{4 h_n}{n(n+1)} \approx \frac{4\ln n}{n(n+1)},
\end{split}
\end{equation}
where $h_n:=\sum_{j=1}^n 1/j$ is the harmonic number. Therefore, integrating this on $[0,cn]$ and letting $\tilde{\zeta}\sim Po(q_{cn}^*)$ for
\[
q_{cn}^*:=cn\left(\frac{2}{n(n+1)} \vee \frac{4h_n}{n(n+1)}\right)\approx \frac{4c \ln n}{n},
\]
we can couple the random variables $I_{cn}(\ell,\ell')$ with indicator random variables $J_{cn}(\ell,\ell')$ defined independently for any pair of labels $\{\ell,\ell'\}$ by 
\[
\p(J_{cn}(\ell,\ell')=1)=1-\p(J_{cn}(\ell,\ell')=0)=\p(\tilde{\zeta}\geq 2),
\] 
such that $J_{cn}(\ell,\ell')=1$ if $I_{cn}(\ell,\ell')=1$ (the converse is not assumed), i.e. $I_{cn}(\ell,\ell')\leq J_{cn}(\ell,\ell')$. We have
\[
\begin{split}
\p(J_{cn}(\ell,\ell')=1)=& e^{-q_{cn}^*} \sum\limits_{j=2}^\infty (q_{cn}^*)^j/j!\\
=& O\left(\left(\frac{\ln n}{n}\right)^2\right).
\end{split}
\]
Defining $J_{cn}=\sum_{\ell,\ell'} J_{cn}(\ell,\ell')$, we get $\e[J_{cn}]=O(\ln^2 n)$ and $var(J_{cn})=O(\ln^2 n)$. Thus, by Chebyshev's inequality, $J_{cn}/\{c_n\sqrt{n}\}\rightarrow 0$, in probability, which implies
\[
\frac{\tilde{I}_{cn}}{c_n\sqrt{n}}\xrightarrow{p} 0, \ \ n\rightarrow \infty,
\]
since $0\leq \tilde{I}_{cn}\leq J_{cn}$. The same lines of arguments as those in the proof of Theorem \ref{random-graph-thm} finish the proof.
\end{proof}
\begin{remark}
Note that $\tilde{Z}_{cn}$ is not an Erd\"os-R\'enyi random graph. Therefore, an exact analysis of the number of trees components of $\tilde{Z}_{cn}$ is not easy and needs further investigation.
\end{remark}
So far, we have seen that the number of cycles in $BP(Y_0,Y_{cn})$ can be estimated by the number of tree components of the random graph $\tilde{Z}_{cn}$. This provides a method to see to what extent evolution by the restricted DCJ process  is bound by parsimony. Of course, this could also be checked by simulating $Y$ and deriving the DCJ distance between $Y_{cn}$ and $Y_0$, using the existing linear algorithms for computing the DCJ distance (cf. \cite{yaf-2005, bms-2006}). But to make use of the method described for the Erd\"os-R\'enyi random graph, we investigated the maximum time for which evolution is bound to parsimony  by counting the number of tree components in $\tilde{Z}_{cn}$. The simulation study summarized in Figure \ref{sims2}) shows that the binding holds up to time $n/2$, for large $n$.
\begin{figure}[!h]
            \centering
            \includegraphics[width=.85\textwidth]{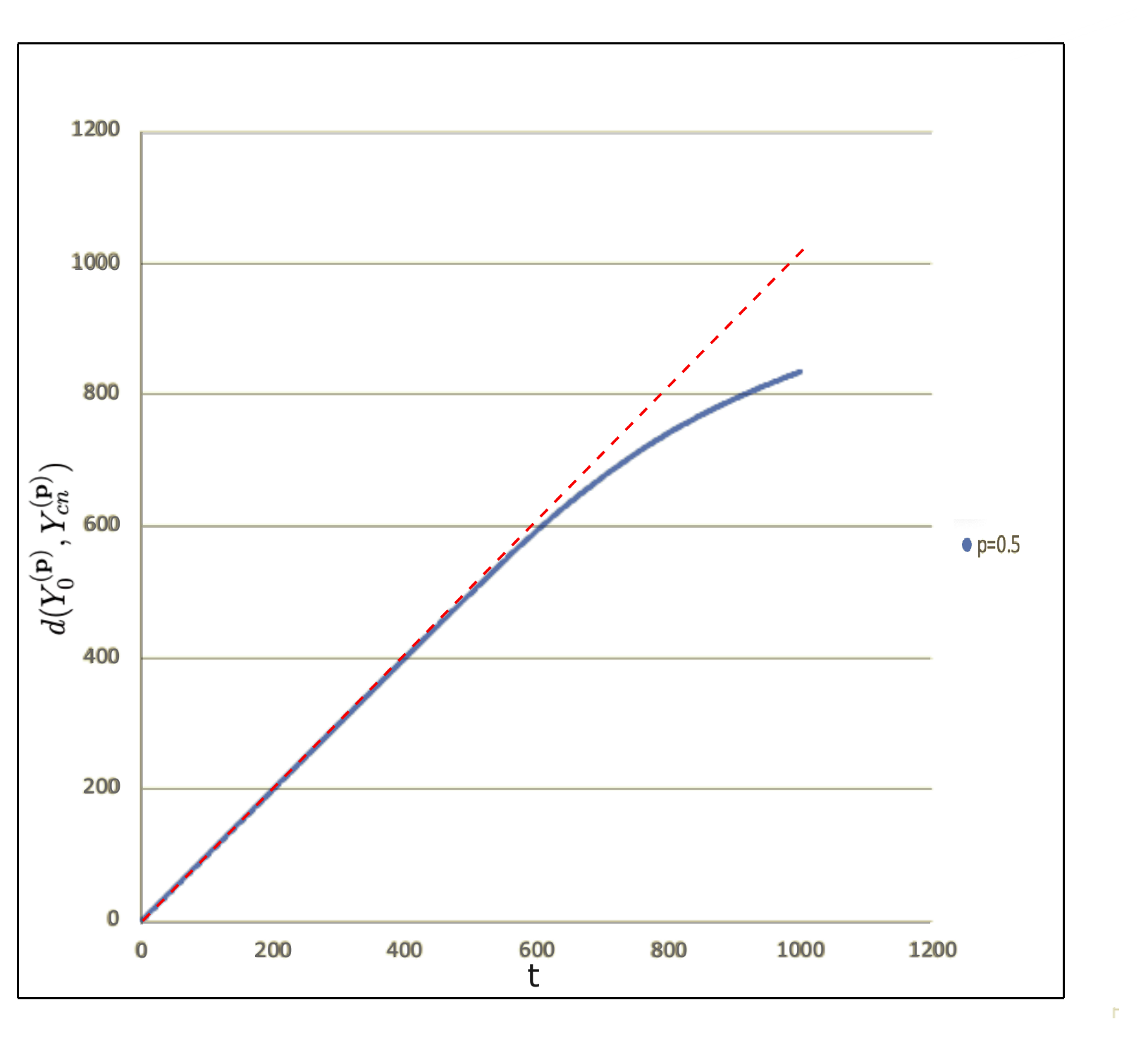}
            \caption{Simulation for the restricted DCJ model illustrating parsimony binding for restricted DCJ up to time $cn$, for $c\approx 0.5$. The average DCJ distance estimates are based on $100$ runs, for $1000$ genes; $p=0.5$. } 
            \label{sims2}
        \end{figure}

\bibliography{ref}
\bibliographystyle{unsrt}
\end{document}